\newtheorem{theorem}{Theorem}
\newtheorem{corollary}[theorem]{Corollary}
\newtheorem{proposition}[theorem]{Proposition}
\newcommand\mm{{\rm mod\ }}
\newcommand{\des}{{\rm des\,}}
\newcommand{\msn}{{\mathcal S}_n}
\newcommand{\sn}{S_n}
\newcommand{\lrf}[1]{\lfloor #1\rfloor}
\title{An explicit formula for the number of permutations with a given number of alternating runs \footnote{This work is supported by the Fundamental Research Funds for the Central Universities (N100323013).}}
\author
{Shi-Mei Ma \footnote{ {\it Email address:}
shimeima@yahoo.com.cn (S.-M. Ma)} }
\date{\footnotesize Department of Information and Computing Science,
        Northeastern University at Qinhuangdao,\\ Hebei 066004,
        China}
\begin{document}

\maketitle

\begin{abstract}
Let $R(n,k)$ denote the number of
permutations of $\{1,2,\ldots,n\}$ with $k$ alternating runs.
In this note we present an explicit formula for the numbers $R(n,k)$.
\bigskip\\
{\sl Keywords:}\quad Alternating runs; Permutations
\end{abstract}
\section{Introduction}
Let $\msn$ denote the symmetric group of all permutations of $[n]$, where $[n]=\{1,2,\ldots,n\}$.
Let $\pi=\pi(1)\pi(2)\cdots\pi(n)\in\msn$.
We say that $\pi$ changes
direction at position $i$ if either $\pi({i-1})<\pi(i)>\pi(i+1)$, or
$\pi(i-1)>\pi(i)<\pi(i+1)$, where $i\in\{2,3,\ldots,n-1\}$. We say that $\pi$ has $k$ {\it alternating
runs} if there are $k-1$ indices $i$ such that $\pi$ changes
direction at these positions. Let $R(n,k)$ denote the number of
permutations in $\sn$ with $k$ alternating runs. Andr\'e~\cite{Andre84} was the first to study
the alternating runs of permutations and he obtained the following recurrence
\begin{equation}\label{rnk-rr}
    R(n,k)=kR(n-1,k)+2R(n-1,k-1)+(n-k)R(n-1,k-2)
\end{equation}
for $n,k\ge 1$, where $R(1,0)=1$ and $R(1,k)=0$ for $k\ge 1$.

Let
$R_n(x)=\sum_{k=1}^{n-1}R(n,k)x^k$.
Then the recurrence (\ref{rnk-rr}) induces
\begin{equation*}
R_{n+2}(x)=x(nx+2)R_{n+1}(x)+x\left(1-x^2\right)R_{n+1}'(x),
\end{equation*}
with initial values $R_1(x)=1$, $R_2(x)=2x$ and $R_3(x)=2x+4x^2$.
For $2\leq n\leq 5$, the coefficients of $R_{n}(x)$
can be arranged as follows with $R({n,k})$ in row $n$ and column $k$:
$$\begin{array}{ccccccc}
  2 & &  & & &\\
  2 & 4 &  & &  &\\
  2 & 12 & 10 &  & &\\
  2& 28 & 58 & 32&  &
\end{array}$$

For a permutation $\pi=\pi(1)\pi(2)\cdots \pi(n)\in\msn$, we define a {\it descent} to be a position $i$ such that $\pi(i)>\pi(i+1)$. Denote by $\des(\pi)$ the number of descents of $\pi$. The generating function for descents is
\begin{equation*}
A_n(x)=\sum_{\pi\in\msn}x^{1+\des(\pi)},
\end{equation*}
which is well known as the {\it Eulerian polynomial}.
The polynomial $R_n(x)$ is
closely related to the Eulerian polynomial $A_n(x)$:
\begin{equation}\label{rnx-anx}
R_n(x)=\left(\dfrac{1+x}{2}\right)^{n-1}(1+w)^{n+1}A_n\left(\dfrac{1-w}{1+w}\right),\quad
w=\sqrt{\frac{1-x}{1+x}},
\end{equation}
which was first established by David and
Barton~\cite[157-162]{DB62} and then stated more concisely by
Knuth~\cite[p.~605]{Knuth73}.
In a series of papers~\cite{Carlitz78,Carlitz80,Carlitz81}, Carlitz studied the
generating functions for the numbers $R(n,k)$. In particular,
Carlitz~\cite{Carlitz78} obtained the generating function
\begin{equation}\label{CarlitzGF}
\sum_{n=0}^\infty (1-x^2)^{-n/2}\frac{z^n}{n!}\sum_{k=0}^nR(n+1,k)x^{n-k}=\frac{1-x}{1+x}\left(\frac{\sqrt{1-x^2}+\sin z}{x- \cos z}\right)^2.
\end{equation}
In~\cite{BE00}, B\'ona and Ehrenborg proved that $R_n(x)$ has the zero $x=-1$ with
multiplicity $\lrf{\frac{n}{2}}-1$.

Recently there has been much interest in obtaining explicit formula for the numbers $R(n,k)$.
For example, Stanley~\cite{Sta08} proved that
\begin{equation*}\label{Rnk-Stan}
R(n,k)=\sum_{i=0}^k\frac{1}{2^{i-1}}(-1)^{k-i}z_{k-i}\sum_{\stackrel{r+2m\leq i}{r\equiv i~\mm 2}}(-2)^m\binom{i-m}{(i+r)/2}\binom{n}{m}r^n,
\end{equation*}
where $z_0=2$ and all other $z_i$'s are 4.
In~\cite{CW08}, Canfield and Wilf showed that
$$R(n,k)=\frac{1}{2^{k-2}}k^n-\frac{1}{2^{k-4}}(k-1)^n+\psi_2(n,k)(k-2)^n+\cdots+\psi_{k-1}(n,k)\quad\textrm{for $k\ge 2$},$$
in which each $\psi_i(n,k)$ is a polynomial in $n$ whose degree in $n$ is $\lrf{i/2}$.
Moreover, Canfield and Wilf~\cite{CW08} also noted that Carlitz~\cite{Carlitz80} stated an incorrect explicit formula for the numbers $R(n,k)$.

As an extension of the work of Canfield and Wilf~\cite{CW08},
by using derivative polynomials introduced by Hoffman~\cite{Hoffman95}, in Section 3 we find an explicit formula for the numbers $R(n,k)$.
\section{Derivative polynomials}
Let $D$ denote the differential operator $\frac{d}{d\theta}$. Set $x= \tan \theta$. Then $D(x)=1+x^2$ and $D(x^n)=nx^{n-1}(1+x^2)$ for $n\geq 1$. Thus $D^n(x)$ is a polynomial in $x$. Let
$P_n(x)=D^n(x)$.
Then $P_0(x)=x$ and $P_{n+1}(x)=(1+x^2)P_n'(x)$. Clearly, $\deg P_n(x)=n+1$.
The first few terms can be computed directly as follows:
\begin{eqnarray*}
  P_1(x) &=& 1+x^2, \\
  P_2(x)&=& 2x+2x^3, \\
  P_3(x)&=& 2+8x^2+6x^4,\\
  P_4(x)&=& 16x+40x^3+24x^5.
\end{eqnarray*}
Let $P_n(x)=\sum_{k=0}^{n+1}p(n,k)x^k$. It is easy to verify that
\begin{equation*}
p(n,k)=(k+1)p(n-1,k+1)+(k-1)p(n-1,k-1).
\end{equation*}
Note that $P_n(-x)=(-1)^{n+1}P_n(x)$.
Hence we have the following expression:
\begin{equation}\label{pnk}
P_n(x)=\sum_{k=0}^{\lrf{\frac{n+1}{2}}}p(n,n-2k+1)x^{n-2k+1}.
\end{equation}

The study of properties of the polynomials $P_n(x)$, initiated in~\cite{Hoffman95}, is presently very active.
We refer the reader to~\cite{Cvijovic09,Cvijovic101,Franssens07,Hoffman99} for recent progress on this subject.
For example, let
$$\tan^k (t)=\sum_{n=k}^\infty T(n,k)\frac{t^n}{n!}.$$
The numbers $T(n,k)$ are  called {\it the tangent numbers of order $k$} (see~\cite[p.~428]{Carlitz72}).
Cvijovi\'c~\cite[Theorem 1]{Cvijovic09} obtained the following formula
$$P_n(x)=T(n,1)+\sum_{k=1}^{n+1}\frac{1}{k}T(n+1,k)x^k.$$

In the following discussion, we will present an explicit expression for the numbers $p(n,k)$.
Let $A(n,k)=|\{\pi\in\msn:\des(\pi)=k-1\}|$.
The Eulerian polynomials $A_n(x)$ are defined by
$A_n(x)=\sum_{k=1}^nA(n,k)x^k$. Let $S(n,k)$ be the Stirling number of the second kind. Explicitly,
$$S(n,k)=\frac{1}{k!}\sum_{r=0}^k(-1)^{k-r}\binom{k}{r}r^n.$$
The Eulerian polynomial $A_n(x)$ admits several expansions in terms of different polynomial bases.
One representative example is the Frobenius formula
\begin{equation}\label{Frobenius}
A_n(x)=\sum_{i=1}^ni!S(n,i)x^i(1-x)^{n-i}
\end{equation}
(see~\cite[Theorem 14.4]{Charalambos02} for details).
Setting $x=(y-1)/(y+1)$ in~(\ref{Frobenius}) and then multiplying both sides by $(y+1)^{n+1}$, we get
\begin{equation*}
\sum_{k=1}^nA(n,k)(y-1)^k(y+1)^{n-k+1}=(y+1)\sum_{i=1}^ni!S(n,i)2^{n-i}(y-1)^i.
\end{equation*}
Put
\begin{equation}\label{Any-Fro}
a_n(y)=(y+1)\sum_{i=1}^ni!S(n,i)2^{n-i}(y-1)^i.
\end{equation}
The first few terms of $a_n(y)$'s are given as follows:
\begin{eqnarray*}
  a_1(y) &=& -1+y^2, \\
  a_2(y)&=& -2y+2y^3, \\
  a_3(y)&=& 2-8y^2+6y^4,\\
  a_4(y)&=& 16y-40y^3+24y^5.
\end{eqnarray*}
Then from~\cite[Theorem~5]{Franssens07},
we have
\begin{equation}\label{any}
a_n(y)=\sum_{k=0}^{\lrf{\frac{n+1}{2}}}(-1)^kp(n,n-2k+1)y^{n-2k+1}.
\end{equation}
Therefore, combining~(\ref{Any-Fro}) and~(\ref{any}), we get the following result.
\begin{proposition}\label{BnyPny}
For $n\geq 1$ and $0\leq k\leq {\lrf{\frac{n+1}{2}}}$, we have
\begin{equation*}
p(n,n-2k+1)=\sum_{i\geq 1}i!S(n,i)2^{n-i}(-1)^{i-n+k}\left[\binom{i}{n-2k}-\binom{i}{n-2k+1}\right].
\end{equation*}
\end{proposition}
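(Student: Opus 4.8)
The plan is to equate the two expressions already derived for the polynomial $a_n(y)$ and compare coefficients of a single power of $y$. Formula~(\ref{any}) presents $a_n(y)$ as a sparse polynomial supported on the exponents $n-2k+1$, with the coefficient of $y^{n-2k+1}$ equal to $(-1)^kp(n,n-2k+1)$. On the other hand, formula~(\ref{Any-Fro}) presents the same polynomial as $(y+1)\sum_{i\geq 1}i!S(n,i)2^{n-i}(y-1)^i$. Hence it suffices to compute the coefficient of $y^{n-2k+1}$ on the right-hand side of~(\ref{Any-Fro}) and set the two equal.

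First I would expand each summand by the binomial theorem. Since the coefficient of $y^m$ in $(y-1)^i$ is $\binom{i}{m}(-1)^{i-m}$, the prefactor $(y+1)=y+1$ contributes to the coefficient of $y^{n-2k+1}$ both the coefficient of $y^{n-2k}$ (from the $y$ term) and the coefficient of $y^{n-2k+1}$ (from the $1$ term) in $(y-1)^i$. This gives
$$\binom{i}{n-2k}(-1)^{i-n+2k}+\binom{i}{n-2k+1}(-1)^{i-n+2k-1},$$
which, after pulling out the common sign $(-1)^{i-n+2k}=(-1)^{i-n}$, collapses to $(-1)^{i-n}\left[\binom{i}{n-2k}-\binom{i}{n-2k+1}\right]$.

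Next I would sum over $i$ and match. Multiplying by $i!S(n,i)2^{n-i}$ and summing, the coefficient of $y^{n-2k+1}$ in~(\ref{Any-Fro}) becomes $\sum_{i\geq 1}i!S(n,i)2^{n-i}(-1)^{i-n}\left[\binom{i}{n-2k}-\binom{i}{n-2k+1}\right]$. Equating this with the coefficient $(-1)^kp(n,n-2k+1)$ coming from~(\ref{any}) and dividing by $(-1)^k$ yields the claimed identity, once one notes that $(-1)^{i-n}/(-1)^k=(-1)^{i-n-k}=(-1)^{i-n+k}$.

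I expect no genuine obstacle here: the argument is a single coefficient extraction. The only points requiring care are the bookkeeping of the shifted binomial index $n-2k$ produced by the $y$ in the factor $(y+1)$, and the sign reconciliation $(-1)^{i-n-k}=(-1)^{i-n+k}$, which holds because $2k$ is even. One should also observe that the upper range of the summation is immaterial: $\binom{i}{n-2k}$ and $\binom{i}{n-2k+1}$ vanish for $i$ outside the pertinent range, so writing $\sum_{i\geq 1}$ in place of $\sum_{i=1}^n$ changes nothing, which is why the statement is phrased with $\sum_{i\geq 1}$.
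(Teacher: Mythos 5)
Your proposal is correct and coincides with the paper's own (largely implicit) argument: the paper obtains Proposition~\ref{BnyPny} by ``combining~(\ref{Any-Fro}) and~(\ref{any})'', which is exactly the coefficient extraction of $y^{n-2k+1}$ that you carry out in detail, including the sign simplification $(-1)^{i-n+2k}=(-1)^{i-n}$ and $(-1)^{i-n-k}=(-1)^{i-n+k}$. Your remark that $\binom{i}{n-2k}$ and $\binom{i}{n-2k+1}$ vanish outside the relevant range, justifying the notation $\sum_{i\geq 1}$, is a correct and welcome piece of bookkeeping that the paper leaves unstated.
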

\section{Explicit Formulas}
In this section, we will obtain an explicit formula for the numbers $R(n,k)$.

Setting $x=\cos 2\theta$ and replacing $z$ by $2z$ in~(\ref{CarlitzGF}), we get
\begin{equation*}
 \sum_{n=0}^\infty \frac{(\sin 2\theta)^{-n}2^nz^n}{n!}\sum_{k=0}^nR(n+1,k){\cos^{n-k} 2\theta}=
   \tan^2 \theta \cot^2(\theta-z).
\end{equation*}
Thus by replacing $z$ by $-z$, we obtain
\begin{equation}\label{Carlitz}
\sum_{n=0}^\infty (-1)^n \frac{(\sin 2\theta)^{-n}2^nz^n}{n!}\sum_{k=0}^nR(n+1,k){\cos^{n-k} 2\theta}=\tan^2 \theta \cot^2(\theta+z).
\end{equation}
By Taylor's theorem, we have
\begin{equation*}\label{cot}
\cot^2(\theta+z)=\sum_{n=0}^\infty D^n(\cot^2 \theta)\frac{z^n}{n!}.
\end{equation*}
Let $y=\cot\theta$. Then $D(y)=-1-y^2$ and $D(y^n)=-ny^{n-1}(1+y^2)$. Put $C_n(y)=D^n(y)$.
Then $C_{n+1}(y)=-(1+y^2)C_n'(y)$ for $n\geq 0$.
Clearly, $C_n(y)=(-1)^nP_n(y)$.
Note that $$D^2(y)=D(D(y))=D(-1-y^2)=-D(y^2).$$
Then
\begin{equation*}\label{DnzPnz}
D^n(y^2)=D^{n-1}(D(y^2))=-D^{n+1}(y)=-C_{n+1}(y)=(-1)^{n}P_{n+1}(y).
\end{equation*}
Therefore, we have
\begin{equation}\label{cotPnz}
\cot^2(\theta+z)=\sum_{n=0}^\infty (-1)^{n}P_{n+1}(\cot\theta)\frac{z^n}{n!}.
\end{equation}

Now we present the main result of this note.
\begin{theorem}\label{mthm}
For $n\geq 2$, we have
\begin{equation}\label{RnxPnx}
R_n(x)=\left(\frac{x+1}{2}\right)^{n-1}\left(\frac{x-1}{x+1}\right)^{\frac{n+1}{2}}P_n\left(\sqrt{\frac{x+1}{x-1}}\right).
\end{equation}
\end{theorem}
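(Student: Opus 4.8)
The plan is to read the theorem off Carlitz's generating function (\ref{Carlitz}) after feeding in the derivative-polynomial expansion (\ref{cotPnz}), by comparing the coefficients of $z^n/n!$ on the two sides.

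First I would substitute the expansion obtained by multiplying (\ref{cotPnz}) by $\tan^2\theta$ into the right-hand side of (\ref{Carlitz}), so that both sides become power series in $z$ whose coefficients are functions of $\theta$. Equating the coefficient of $z^n/n!$ cancels the common factor $(-1)^n$ and gives
\[
(\sin 2\theta)^{-n}\,2^n\sum_{k=0}^n R(n+1,k)\cos^{n-k}2\theta=\tan^2\theta\,P_{n+1}(\cot\theta).
\]
Shifting $n\mapsto n-1$ this becomes
\[
\sum_{k=0}^{n-1} R(n,k)\cos^{n-1-k}2\theta=\frac{(\sin 2\theta)^{n-1}}{2^{n-1}}\,\tan^2\theta\,P_n(\cot\theta).
\]
Next I would set $x=\cos 2\theta$ and identify the left-hand side: since $R(n,0)=0$ for $n\geq 2$, the sum is exactly $x^{n-1}R_n(1/x)$, the reversal of $R_n$. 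On the right I would apply the half-angle identities $\sin 2\theta=\sqrt{1-x^2}$, $\tan^2\theta=(1-x)/(1+x)$ and $\cot\theta=\sqrt{(1+x)/(1-x)}$, producing
\[
x^{n-1}R_n(1/x)=\frac{(1-x^2)^{(n-1)/2}}{2^{n-1}}\cdot\frac{1-x}{1+x}\,P_n\!\left(\sqrt{\frac{1+x}{1-x}}\right).
\]

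To convert the reversal $R_n(1/x)$ into $R_n$ of a single variable I would substitute $x=1/t$. Then $x^{n-1}R_n(1/x)=t^{1-n}R_n(t)$, while $1-x^2=(t^2-1)/t^2$, $(1-x)/(1+x)=(t-1)/(t+1)$, and $\sqrt{(1+x)/(1-x)}=\sqrt{(t+1)/(t-1)}$. Clearing the factor $t^{1-n}$ yields
\[
R_n(t)=\frac{1}{2^{n-1}}(t^2-1)^{(n-1)/2}\,\frac{t-1}{t+1}\,P_n\!\left(\sqrt{\frac{t+1}{t-1}}\right),
\]
and the proof closes with the routine exponent bookkeeping showing that $\frac{1}{2^{n-1}}(t^2-1)^{(n-1)/2}\frac{t-1}{t+1}$ equals $\left(\frac{t+1}{2}\right)^{n-1}\left(\frac{t-1}{t+1}\right)^{(n+1)/2}$, which follows by writing $t^2-1=(t-1)(t+1)$ and collecting powers of $t-1$ and $t+1$.

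The point that deserves the most care, and which I would isolate explicitly, is the treatment of the radicals: a priori $\sin 2\theta$, $\cot\theta$ and the square roots in $t$ each carry a sign or branch ambiguity. The resolving observation is that by (\ref{pnk}) every monomial of $P_n$ has exponent congruent to $n+1$ modulo $2$, so $\left(\frac{t-1}{t+1}\right)^{(n+1)/2}P_n\!\left(\sqrt{(t+1)/(t-1)}\right)$ is in fact a polynomial in $(t-1)/(t+1)$ with no surviving radical. Consequently the claimed right-hand side is a genuine rational (indeed polynomial) function of $t$, the branch choices are immaterial, and it is enough to verify the identity on an interval of $\theta$ on which all the trigonometric substitutions are valid; equality as formal polynomial identities follows at once.
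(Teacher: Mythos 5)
Your proof is correct and follows essentially the same route as the paper's: substituting (\ref{cotPnz}) into (\ref{Carlitz}), equating coefficients of $z^n/n!$, shifting $n\mapsto n-1$, setting $x=\cos 2\theta$, and finally passing from the reversal $x^{n-1}R_n(1/x)$ to $R_n$ via the substitution $x\mapsto 1/x$. Your explicit handling of the branch ambiguities, using the parity structure (\ref{pnk}) of $P_n$ to show the radicals cancel and the identity is one of rational functions, is a point of rigor the paper leaves implicit, but it does not alter the argument.
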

\begin{proof}
Substituting~(\ref{cotPnz}) into~(\ref{Carlitz}), we get
\begin{equation*}
 \sum_{n=0}^\infty (-1)^n \frac{(\sin 2\theta)^{-n}2^nz^n}{n!}\sum_{k=0}^nR(n+1,k){\cos^{n-k} 2\theta}
 =\tan^2 \theta \sum_{n=0}^\infty (-1)^{n}\frac{z^n}{n!}\sum_{k=0}^{n+2}p(n+1,k)\cot^k\theta.
\end{equation*}
Equating the coefficients of $(-1)^n{z^n}/{n!}$, we obtain
\begin{equation}\label{exp-1}
(\sin 2\theta)^{-n}2^n\sum_{k=0}^nR(n+1,k){\cos^{n-k} 2\theta}=\tan^2 \theta\sum_{k=0}^{n+2}p(n+1,k)\cot^k\theta.
\end{equation}
Replacing $n$ by $n-1$ in~(\ref{exp-1}), it follows that
\begin{equation*}
 \sum_{k=0}^{n-1}R(n,k){\cos^{n-k-1} 2\theta}
 =\frac{(1+ \cos 2\theta)^{n-1}}{2^{n-1}} \left(\frac{1- \cos 2\theta}{1+ \cos 2\theta}\right)^{\frac{n+1}{2}}\sum_{k=0}^{n+1}p(n,k)\left(\frac{1+ \cos 2\theta}{1- \cos 2\theta}\right)^{\frac{k}{2}}\\
\end{equation*}
Consequently, replacing $\cos 2\theta$ by $x$, we get
\begin{equation}\label{Rnk}
\sum_{k=0}^{n-1}R(n,k)x^{n-k-1}=\left(\frac{1+x}{2}\right)^{n-1}\left(\frac{1-x}{1+x}\right)^{\frac{n+1}{2}}
P_n\left(\sqrt{\frac{1+x}{1-x}}\right).
\end{equation}
Replacing $x$ by $1/x$ and then multiplying both sides of~(\ref{Rnk}) by $x^{n-1}$, the desired result follows.
\end{proof}

Combining~(\ref{pnk}) and~(\ref{RnxPnx}), we get
\begin{equation}\label{R2nx}
R_{n}(x)=\frac{1}{2^{n-1}}\sum_{k=0}^{\lrf{\frac{n+1}{2}}} p(n,n-2k+1)(x+1)^{n-k-1}(x-1)^k.
\end{equation}
Therefore, by~(\ref{R2nx}), we immediately get that
$R_n(x)$ is divisible by $(x+1)^{\lrf{n/2}-1}$.
Denote by $E(n,k,s)$ the coefficients of $x^s$ in $(x+1)^{n-k-1}(x-1)^k$.
It is easy to verify that
$$E(n,k,s)=\sum_{j=0}^{\min (k,s)}(-1)^{k-j}\binom{n-k-1}{s-j}\binom{k}{j}$$
Consequently, by~(\ref{R2nx}), we obtain the following corollary.
\begin{corollary}
For $n\geq 1$ and $1\leq s\leq n-1$, we have
\begin{equation}\label{explicit}
R(n,s)=\frac{1}{2^{n-1}}\sum_{k=0}^{\lrf{\frac{n+1}{2}}}p(n,n-2k+1)E(n,k,s).
\end{equation}
\end{corollary}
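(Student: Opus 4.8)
The plan is to read off the coefficient of $x^s$ from both sides of the expansion~(\ref{R2nx}), which already presents $R_n(x)$ as a linear combination of the products $(x+1)^{n-k-1}(x-1)^k$. By the definition $R_n(x)=\sum_{s=1}^{n-1}R(n,s)x^s$, the coefficient of $x^s$ on the left-hand side is exactly $R(n,s)$, so the whole matter reduces to computing the coefficient of $x^s$ on the right-hand side. (For $n=1$ the range $1\le s\le n-1$ is empty and there is nothing to prove; the substance is the case $n\ge 2$ governed by~(\ref{R2nx}).)

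First I would fix $k$ and extract the coefficient of $x^s$ from the single product $(x+1)^{n-k-1}(x-1)^k$. Applying the binomial theorem to each factor gives $(x+1)^{n-k-1}=\sum_i\binom{n-k-1}{i}x^i$ and $(x-1)^k=\sum_j(-1)^{k-j}\binom{k}{j}x^j$; collecting the terms with $i+j=s$ produces precisely $E(n,k,s)=\sum_{j=0}^{\min(k,s)}(-1)^{k-j}\binom{n-k-1}{s-j}\binom{k}{j}$, under the convention that $\binom{n-k-1}{s-j}$ vanishes whenever $s-j$ lies outside $\{0,1,\dots,n-k-1\}$. This is exactly the auxiliary identity recorded just before the corollary.

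With $E(n,k,s)$ identified as the $x^s$-coefficient of the $k$th summand, I would then sum over $k$ from $0$ to $\lrf{\frac{n+1}{2}}$ and carry along the scalar $1/2^{n-1}$, as~(\ref{R2nx}) dictates, to obtain the $x^s$-coefficient of the right-hand side. Equating this with $R(n,s)$ yields~(\ref{explicit}). The argument is pure bookkeeping: the derivative-polynomial machinery and the substitution $x=\cos 2\theta$ have already done all the analytic work in Theorem~\ref{mthm} and in deriving~(\ref{R2nx}). The only point that calls for a little care is the range of the inner index $j$ together with the zero-binomial convention, which is what allows the single upper limit $\min(k,s)$ to capture all the nonzero contributions regardless of how $s$ compares to $n-k-1$.
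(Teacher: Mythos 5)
Your proposal is correct and matches the paper's own argument: the paper likewise obtains the corollary by reading off the coefficient of $x^s$ in~(\ref{R2nx}), with $E(n,k,s)$ defined as the $x^s$-coefficient of $(x+1)^{n-k-1}(x-1)^k$ and computed by the same binomial expansion. Your write-up merely spells out the bookkeeping (the index range $j\le\min(k,s)$ and the vanishing-binomial convention) that the paper compresses into ``it is easy to verify.''
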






\begin{thebibliography}{18}
\bibitem{Andre84}
D. Andr\'e, \'Etude sur les maxima, minima et s\'equences des permutations, Ann. Sci. \'Ecole Norm. Sup. 3 no. 1 (1884), 121--135.

\bibitem{BE00}
M. B\'ona, R. Ehrenborg, A combinatorial proof of the log-concavity
of the numbers of permutations with $k$ runs, J. Combin. Theory Ser.
A 90 (2000) 293--303.


\bibitem{CW08}
E.R. Canfield, H. Wilf, Counting permutations by their alternating runs, J. Combin. Theory Ser.
A 115 (2008) 213--225.

\bibitem{Carlitz72}
L. Carlitz and R. Scoville, Tangent numbers and operators, Duke Math. J. 39 (1972) 413--429.

\bibitem{Carlitz78}
L. Carlitz, Enumeration of permutations by sequences, Fibonacci Quart. 16 (3) (1978) 259--268.
\bibitem{Carlitz80}
L. Carlitz, The number of permutations with a given number of sequences, Fibonacci Quart. 18 (4) (1980) 347--352.
\bibitem{Carlitz81}
L. Carlitz, Enumeration of permutations by sequences, II, Fibonacci Quart. 19 (5) (1981) 398--406.

\bibitem{Charalambos02}
A. Charalambos Charalambides, Enumerative Combinatorics, Chapman  Hall/ CRC, 2002.

\bibitem{Cvijovic09}
D. Cvijovi\'c, Derivative polynomials and closed-form higher derivative formulae,
Appl. Math. Comput. 215 (2009) 3002--3006.

\bibitem{Cvijovic101}
D. Cvijovi\'c, The Lerch zeta and related functions of non-positive integer order,
Proc. Amer. Math. Soc. 138 (2010), 827-836.

\bibitem{DB62}
F. N. David and D. E. Barton, Combinatorial Chance, Charles Griffin
and Company, Ltd. London, UK, 1962.


\bibitem{Franssens07}
G. R. Franssens, Functions with derivatives given by polynomials in the function
itself or a related function, Anal. Math. 33 (2007) 17--36.

\bibitem{Hoffman95}
M. E. Hoffman, Derivative polynomials for tangent and secant, Amer. Math. Monthly 102 (1995) 23--30.

\bibitem{Hoffman99}
M. E. Hoffman, Derivative polynomials, Euler polynomials, and associated integer
sequences, Electron. J. Combin. 6 (1999) \#R21.

\bibitem{Knuth73}
D.E. Knuth, The Art of Computer Programming, vol. 3, Fundamental
Algorithms, Addison-Wesley, Reading, MA, 1973.


\bibitem{Sta08}
R.P. Stanley, Longest alternating subsequences of permutations,
Michigan Math. J. 57 (2008), 675-687.
\end{thebibliography}
\end{document}